\def\Xint#1{\mathchoice
	{\XXint\displaystyle\textstyle{#1}}%
	{\XXint\textstyle\scriptstyle{#1}}%
	{\XXint\scriptstyle\scriptscriptstyle{#1}}%
	{\XXint\scriptscriptstyle\scriptscriptstyle{#1}}%
	\!\int}
\def\XXint#1#2#3{{\setbox0=\hbox{$#1{#2#3}{\int}$ }
		\vcenter{\hbox{$#2#3$ }}\kern-.6\wd0}}
\def\dashint{\Xint-}
\newcommand{\essinf}{\text{ess inf}}
\newcommand{\esssup}{\text{ess sup}}
\newcommand{\ox}{{\bar{x}}}
\newcommand{\ux}{{\underline{x}}}
\newcommand{\R}{\mathbb R}
\newcommand{\rarrow}{\rightarrow}
\newcommand{\Ap}{$\text{A}_\text{p}$}
\newcommand{\Aone}{$\text{A}_\text{1}$}
\newtheorem{thm}{Theorem}[section]
\newtheorem{lem}[thm]{Lemma}
\newtheorem{prop}[thm]{Proposition}
\newtheorem{rem}[thm]{Remark}
\newtheorem{defn}[thm]{Definition}
\newtheorem{example}[thm]{Example}
\numberwithin{equation}{section}
\begin{document}
\title{\Large\bf  Admissibility versus \Ap-conditions on regular trees
\footnotetext{\hspace{-0.35cm}
$2010$ {\it Mathematics Subject classfication}: 30L99, 31C45, 46E35
\endgraf{{\it Key words and phases}: $\text{A}_\text{p}$-condition, doubling measure, Poincar\'e inequality, regular tree}
 \endgraf{Authors have been supported by the Academy of Finland via  Centre of Excellence in Analysis and Dynamics Research (project No. 307333).}
 }
}
\author{Khanh Ngoc Nguyen,  Zhuang Wang}
\date{ }
\maketitle
\begin{center}
\emph{Dedicated to Professor Pekka Koskela on the occasion of his 59th birthday celebration}
\end{center}
\begin{abstract}
We show that the combination of doubling and $(1,p)$-Poincar\'e inequality is equivalent to a version of the \Ap-condition on rooted $K$-ary trees.
\end{abstract}
\section{Introduction}
The class of $p$-admissible weights for Sobolev spaces and differential equations on $\R^n$ was introduced in \cite{HKM}. The definition was initially based on four conditions, but Theorem 2 in \cite{PP1995} and Theorem 5.2 in \cite{HK95} reduce them to the following two conditions, see also \cite[2nd ed., Section 20]{HKM}. 
\begin{defn}\rm 
A measure $\mu$ on $\R^n$ is {\it $p$-admissible}, $1\leq p<\infty$, if it is doubling and supports a $(1, p)$-Poincar\'e inequality. If $d\mu=w \,dx$, we also say that the weight $w$ is $p$-admissible.
\end{defn}
Here $\mu$ supports a $(q, p)$-Poincar\'e inequality, $1\leq q<\infty, 1\leq  p<\infty$, if there is a constant $C>0$ such that 
\[ \left(\dashint_{B(x, r)}|u-u_{B(x, r)}|^q\, d\mu\right)^{1/q}\leq C r\left(\dashint_{B(x, r)} |\nabla u|^p\, d\mu\right)^{1/p}
\] 
for every $u\in C^1(\mathbb R^n)$, every $x\in \R^n$ and all $r>0$.

In \cite[Section 15]{HKM}, it was shown that Muckenhoupt $\text{A}_\text{p}$-weights are $p$-admissible, but the converse is not true in $\R^n$, $n\geq 2$, see also \cite{Jana01}.  Surprisingly, on the real line $\R$, any $p$-admissible measure is actually given by an $\text{A}_\text{p}$-weight, see \cite{BBK06}. Very recently, it was also shown in \cite{BBS19} that a measure on $\R$ is locally $p$-admissible if and only if it is given by a local $\text{A}_\text{p}$-weight. Moreover, on $\R^n, p$-admissible measures can be characterized by a stronger version of the Poincar\'e inequality, the $(q, p)$-Poincar\'e inequality with $q>p$. Under doubling, the $(1, p)$-Poincar\'e inequality improves to a $(q, p)$-Poincar\'e inequality with $q>p$ by \cite{PP1995} and any measure satisfying $(q, p)$-Poincar\'e inequality with $q>p$ is a doubling measure, see \cite{AH19} and \cite{KMR15}.

In the recent years, analysis on regular trees has been under development, see \cite{BBGS,KW19,PKZ,khanh,khanhzhuang}. Given  a $K$-regular tree $X$ (a rooted $K$-ary tree), $K\geq 1$,  we introduce a metric structure on $X$ by considering each edge of $X$ to be an isometric copy of the unit interval. Then the distance between two vertices is the number of edges needed to connect them and there is a unique geodesic that minimizes this number.  Let us denote the root by 0. If $x$ is a vertex, we define $|x|$ to be the distance between 0 and $x.$ Since each edge is an isometric copy of the unit interval, we may extend this distance naturally to any $x$ belonging to an edge. 

Write $d|x|$ for the length element on $X$ and let $\mu:[0,\infty)\to (0,\infty)$ be a locally integrable function. We abuse notation and refer also to the measure generated via $d\mu(x)=\mu(|x|)d|x|$ by $\mu.$ Further, let $\lambda:[0,\infty)\to (0,\infty)$ be locally integrable and define a distance via $ds(x)=\lambda(|x|)d|x|$ by setting $d(z,y)=\int_{[z,y]}ds(x)$
whenever $z,y\in X$ and $[z,y]$ is the unique geodesic between $z$ and $y$. We abuse the notation and let $\mu(x)$ and $\lambda(x)$ denote $\mu(|x|)$ and $\lambda(|x|)$, respectively, for any  $x\in X$, if there is no danger of confusion. Throughout this paper,  we assume additionally that the diameter of $X$ is infinity. 

Our space $(X, d, \mu)$ is a metric measure space and hence one may define a Newtonian Sobolev space $N^{1, p}(X):=N^{1, p}(X, d, \mu)$ based on upper gradients \cite{HK} and \cite{N00}.
 It is then natural to ask if we can characterize the $p$-admissibility of a given $\mu$, see Section \ref{adm} for the definitions.  To do so, we introduce the following $\text{A}_\text{p}$-conditions on regular trees.

Before continuing, we first introduce some notations. For any $x\in X$ and $r>0$, we denote by $\ox^r$ the point in $[0, x]$ with $d(\ox^r, x)=\min\{r, d(0,x)\}$ 
and denote by $\ux_r$ a point in $X$ such that $x\in [0, \ux_r]$ with $d(\ux_r, x)=r$. Hence $\ox^r$ is an ancestor of $x$ and  $\ux_r$ is a descendant of $x$, see Section \ref{section2.1} for more relations between points on regular trees. Also let
\[F(x,r)=\{y\in X: x\in[0,y], d(x,y)<r\}
\]be the downward directed ``half ball".
It is perhaps worth to mention that the notations $\ox^r$ and $F(x,r)$ coincide with the notation ``$z$" and $F(x, r)$ in \cite[Lemma 3.2]{BBGS}, respectively. 

Given $1<p<\infty$, we set
\begin{equation}
\label{ap}\text{A}_\text{p}(x, r)=\frac{\mu(F(\ox^r, 2r))}{2r}\cdot\left (\frac{1}{r}\int_{[x, \ux_r]}\left (\frac{K^{j(w)-j(x)}\mu(w)}{\lambda(w)}\right )^{\frac{1}{1-p}}\,ds(w)\right )^{p-1}
\end{equation}
and we define
\begin{equation}\label{a1}
\text{A}_\text{1}(x, r) =\frac{\mu(F(\ox^r, 2r))}{2r}\cdot \esssup_{w\in [x, \ux_r]}\frac{\lambda(w)}{K^{j(w)-j(x)}\mu(w)}
\end{equation}
where $j(w)$ and  $j(x)$ are the smallest integers such that $j(w)\geq |w|$ and $j(x)\geq |x|$ , respectively.
Notice that $\text{A}_\text{p}(x, r)$ is independent of the choice of $\ux_r$ among the points $y$ with $x\in [0, y]$ and $d(y, x)=r$.
\begin{defn} \rm\label{def-ap}Let $1\leq p<\infty$ and $X$ be a $K$-regular tree with distance $d$ and metric $\mu$. We say that $\mu$ satisfies the {\it \Ap-condition} if 
\begin{equation}\label{Ap1}
\sup\left\{\text{A}_\text{p}(x, r): x\in X, r>0\right\}<\infty.
\end{equation}
We say that $\mu$ satisfies the {\it $\text{A}_\text{p}$-condition far from $0$} if 
\begin{equation}\label{Ap2}
\sup\left\{\text{A}_\text{p}(x, r): x\in X, 0<r\leq 8\, d(0, x)\right\}<\infty.
\end{equation}
\end{defn}

If $K=1$ and $\lambda\equiv 1$, then the $1$-regular tree $(X, d, \mu)$ is isometric to the half line $(\R^+, dx, \mu\, dx)$ and our $\text{A}_\text{p}$-condition \eqref{Ap1} is equivalent to $\mu$ being a Muckenhoupt $\text{A}_\text{p}$-weight, see \cite{HKM,Jana01,BBK06,BBS19} for more information about Muckenhoupt $\text{A}_\text{p}$-weights.
Above, we call \eqref{Ap2} ``$\text{A}_\text{p}$-condition far from $0$" since $0<r\leq 8\, d(0, x)$ is equivalent to $d(0, x)\geq r/8>0$, which means that $x$ has to be ``far" away from the root $0$ in terms of $r$. 

The main result of this paper is the following characterization of $p$-admissibility on regular trees.
\begin{thm}\label{main theorem} Let $1\leq p<\infty$ and $X$ be a $K$-regular tree with distance $d$ and measure $\mu$. Then we have:
	\begin{enumerate}
		\item For $K=1$,  $\mu$ is $p$-admissible if and only if $\mu$ satisfies the \Ap-condition far from $0$.
		\item 
		For $K\geq 2$, $\mu$ is $p$-admissible if and only if $\mu$ satisfies the \Ap-condition.
	\end{enumerate}
\end{thm}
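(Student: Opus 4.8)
The plan is to regard $p$-admissibility as the conjunction of the doubling property and the $(1,p)$-Poincar\'e inequality and to establish each of the two implications separately, running the $K=1$ and $K\geq 2$ arguments in parallel and isolating the behavior near the root $0$ as the only genuinely different ingredient. Two preliminary reductions would organize everything. First, the combinatorial identity $\mu(F(x,r))\approx\int_{[x,\ux_r]}K^{j(w)-j(x)}\mu(w)\,d|x|$, valid because the number of descendants of $x$ is multiplied by $K$ at each level, lets me trade integrals over the full downward half-ball for weighted integrals over a single geodesic; this is exactly the role of the branching weight $K^{j(w)-j(x)}$ in \eqref{ap}. Second, the elementary ball geometry $F(\ox^r,r)\subseteq B(x,r)\subseteq F(\ox^r,2r)$, obtained by following the geodesic up to the ancestor $\ox^r$ and then down, which explains why $\mu(F(\ox^r,2r))$ is the natural numerator of the $A_p$ functional.

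For the implication that the $A_p$-condition gives $p$-admissibility I would first extract doubling. Writing $S(x,r)=\int_{[x,\ux_r]}(K^{j(w)-j(x)}\mu(w)/\lambda(w))^{1/(1-p)}\,ds$, Hölder's inequality applied to the two factors of \eqref{ap} yields the universal bound $\mu(F(x,r))\,S(x,r)^{p-1}\geq r^p$; dividing this into the hypothesis $\text{A}_\text{p}(x,r)\le C$ gives $\mu(F(\ox^r,2r))\le C'\mu(F(x,r))$, and feeding this into the ball decomposition promotes it to $\mu(B(x,2r))\le C''\mu(B(x,r))$. For the $(1,p)$-Poincar\'e inequality I would use the telescoping estimate $|u(y)-u(\ox^r)|\le\int_{[\ox^r,y]}g\,ds$ for an upper gradient $g$ of $u$, average over $F(\ox^r,2r)$, interchange the order of integration by Fubini, and apply Hölder with exponents $p$ and $p'$ against the dual weight $(K^{j(w)-j(x)}\mu/\lambda)^{1/(1-p)}$; the factor $S(x,r)$ then appears as the constant, so the uniform bound $\sup\text{A}_\text{p}<\infty$ (together with doubling to pass from half-balls to balls) closes the estimate.

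For the converse I would feed a sharp radial competitor into the Poincar\'e inequality. Fixing $x$ and $r$, let $u(y)=\phi(|y|)$ depend only on the level, with $\phi'$ proportional to $\lambda^{p/(p-1)}(K^{j(w)-j(x)}\mu)^{-1/(p-1)}$ on the levels between $x$ and $\ux_r$ and $u$ held constant above $x$ and below $\ux_r$. A short computation gives simultaneously $\phi(|\ux_r|)-\phi(|x|)=S(x,r)$ and, because the measure of $F$ already carries the branching factor, $\int_{F(\ox^r,2r)}g^p\,d\mu=S(x,r)$, while doubling forces the mean of $u$ to remain a definite proportion away from both $0$ and $S(x,r)$, so the oscillation on the left of Poincar\'e is $\gtrsim S(x,r)$. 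Rearranging $S(x,r)\lesssim r\,(S(x,r)/\mu(F(\ox^r,2r)))^{1/p}$ is then precisely $\text{A}_\text{p}(x,r)\le C$. For $p=1$ I would replace the power weight by the corresponding $\esssup$ and use an indicator-type competitor to recover \eqref{a1}.

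The hard part, and the exact point at which the two cases split, is the control of the oscillation of the competitor near the root. The lower bound $\gtrsim S(x,r)$ requires the testing ball to contain regions of comparable measure on which $u\approx 0$ and $u\approx S(x,r)$. When $K\geq 2$ the exponential branching always furnishes such regions, even when $\ox^r=0$, so the competitor argument runs at every scale and admissibility is genuinely equivalent to the full condition \eqref{Ap1}; correspondingly, the forward direction needs $A_p$ at all configurations to chain through the densely branching neighborhood of the root. When $K=1$ the space is the half-line and the endpoint $0$ destroys this balancing once $r\gtrsim d(0,x)$, so the competitor only produces the restricted condition \eqref{Ap2}; symmetrically, near the endpoint the Poincar\'e inequality is essentially free and the far-from-$0$ data suffice to run the forward direction, exactly as for Muckenhoupt weights on $\R$ with a reflecting boundary at $0$. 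Carefully quantifying these near-root measure comparisons is where I expect the bulk of the case-specific work to lie.
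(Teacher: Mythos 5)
Your forward implication (the \Ap-condition implies $p$-admissibility) is essentially the paper's Lemma~\ref{Lemma-tree-1}: the identity $\mu(F(x,r))=\int_{[x,\ux_r]}K^{j(w)-j(x)}\mu(w)\lambda(w)^{-1}\,ds(w)$, the H\"older bound $r^p\leq \mu(F(x,r))\,S(x,r)^{p-1}$ for the doubling step, and the telescoping--Fubini--H\"older derivation of the Poincar\'e inequality all match the paper's computations. One small slip there: $F(\ox^r,r)\not\subseteq B(x,r)$ in general, since a point $y$ in a sibling branch just below $\ox^r$ can have $d(x,y)$ close to $2r$; the inclusions the argument actually needs are $F(x,r)\subseteq B(x,r)$ and $B(x,2r)\subseteq F(\ox^{2r},4r)$, which is harmless to fix.

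The converse for $K\geq 2$, however, has a genuine gap: your competitor $u(y)=\phi(|y|)$ is \emph{radial}, while the functional \eqref{ap} normalizes branching at $x$, not at the ancestor. Two concrete failures result. First, with $g=(K^{j(y)-j(x)}\mu(y)/\lambda(y))^{1/(1-p)}$ on the active levels, each level of $F(\ox^r,2r)$ contains $K^{j(y)-j(\ox^r)}$ edges, so $\int_{F(\ox^r,2r)}g^p\,d\mu=K^{j(x)-j(\ox^r)}S(x,r)$ rather than $S(x,r)$; since $\lambda$ may be small between $|\ox^r|$ and $|x|$, a fixed distance $r$ can span arbitrarily many integer levels, the factor $K^{j(x)-j(\ox^r)}$ is unbounded over pairs $(x,r)$, and rearranging only yields \eqref{Ap1} with an uncontrolled loss. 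Second, your claim that for $K\geq2$ branching ``always furnishes'' a region of comparable measure where $u\approx 0$, even when $\ox^r=0$, is false for a radial competitor: sibling branches carry the \emph{same} level-dependent values as $[x,\ux_r]$, so when $d(0,x)\ll r$ the set $\{u=0\}\cap B(x,r)\subseteq\{y:|y|\leq|x|\}$ has measure far below $\mu(B(x,r))$ (it is empty when $x=0$), and the oscillation lower bound dies exactly where you invoke branching to save it. The paper's Lemma~\ref{Lemma-tree-2} resolves both problems at once by \emph{localizing}: pick one child $x_1$ of $x$, set $T_1=[x,x_1]\cup T_{x_1}$, and take $u\equiv 0$ on $X\setminus T_1$, $u$ the geodesic integral on $F(x,r/2)\cap T_1$, and $u\equiv b$ otherwise. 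Then the upper-gradient energy is exactly $b=S(x,r/2)$ with the correct weight $K^{j(w)-j(x)}$, and the sets $E_1=B(x,r)\setminus T_1$ and $E_2=T_1\cap F(x,r)\setminus F(x,r/2)$ have measure at least ${C_d}^{-4}K^{-1}\mu(B(x,r))$ by doubling; $E_1$ is nonempty precisely because $K\geq 2$, and for $K=1$ one must instead use the region above $x$, forcing $d(0,x)\gtrsim r$ --- which is the true source of the ``far from $0$'' restriction, as your last paragraph correctly anticipates. Finally, for $K=1$ the near-root regime is not ``essentially free'': the paper needs a separate transfer argument for doubling near $0$ (via balls centered at $\underline{0}_{r/2}$, Lemma~\ref{lemma4.3}) and a chain argument \`a la Haj\l asz--Koskela to obtain the Poincar\'e inequality at $0$ (Lemma~\ref{lemma53}) before gluing everything in Proposition~\ref{prop}.
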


The characterizations for $K=1$ and $K\geq 2$ are different. For $K\geq 2$, a $K$-regular tree has a kind of symmetry property with respect to the root $0$, since the root has more than one branch. But for $K=1$, the root $0$ behaves like an end point. 

Readers who are familiar with the results on the real line $\R$ may regard our $K$-regular tree with $K\geq 2$ as a generalized model of the real line $\R$. As a byproduct, a slightly modified proof of Theorem \ref{main theorem} for $K\geq 2$ gives a new proof of \cite[Theorem 2]{BBK06}. On the other hand, for $K=1$, one
may connect the result on $1$-regular trees with the result on bounded intervals (see \cite[Theorem 4.6]{BBS19} for bounded intervals). Hence Theorem \ref{main theorem} is new and interesting even when $K=1$ and $\lambda\equiv 1$, since it gives a full characterization of $p$-admissibility on the half line $\R^+$. 

 In \cite[Example 4.7]{BBS19}, one can find a weight $\omega$ on the interval $[0, 1]$ which is $1$-admissible but not a  Muckenhoupt $\text{A}_\text{1}$-weight on $(0, 1)$. By a suitable constant extension of $\omega$ on $(1, \infty)$, we obtain a weight $\omega'$ which is  $1$-admissible but not a  Muckenhoupt $\text{A}_\text{1}$-weight on $\R^+$.  As evidence towards Theorem \ref{main theorem} for $K=1$, it is easy to check that the extended weight $\omega'$ on $\R^+$ satisfies the $\text{A}_\text{1}$-condition far from $0$, i.e., condition \eqref{Ap2} holds. We refer to \cite{BBS19} and \cite{CW00} for more details.

Let us close this introduction by pointing out that the constant ``$8$" in \Ap-condition far from $0$ \eqref{Ap2} is not necessary. Actually replacing $8$ by any  constant $\infty>c>1$, Theorem \ref{main theorem} for $K=1$ holds. Here the requirement of $c>1$ is sharp in the sense that there exists an example $(\mathbb R_+, dx, \mu \, dx)$ such that \eqref{Ap2} holds for any positive constant $c'<1$ replacing $8$, but $\mu$ is not even doubling, see Remark \ref{remark} and Example \ref{example}. 



The paper is organized as follows. In section \ref{section2}, we introduce regular trees, $p$-admissibility and Newtonian spaces on our tree. We give the proof of Theorem \ref{main theorem} for $K\geq 2$ in Section \ref{proof_2} and the proof of Theorem \ref{main theorem} for $K=1$ is given in Section \ref{section4}. 

\section{Preliminaries}\label{section2}
Throughout this paper, the letter $C$ (sometimes with a subscript) will denote positive constants; if $C$ depends on $a,b,\ldots$, we write
$C=C(a,b,\ldots)$.

\subsection{Regular trees and their boundaries}\label{section2.1}
 {A {\it graph} $G$ is a pair $(V, E)$, where $V$ is a set of vertices and $E$ is a set of edges.   We call a pair of vertices $x, y\in V$  neighbors if $x$ is connected to $y$ by an edge. The degree of a vertex is the number of its neighbors. The graph structure gives rise to a natural connectivity structure. A {\it tree} is a connected graph without cycles. A graph (or tree) is made into a metric graph by considering each edge as a geodesic of length one.

We call a tree $X$ a {\it rooted tree} if it has a distinguished vertex called the {\it root}, which we will denote by $0$. The neighbors of a vertex $x\in X$ are of two types: the neighbors that are closer to the root are called {\it parents} of $x$ and all other  neighbors  are called {\it children} of $x$. Each vertex has a unique parent, except for the root itself that has none. 

A {\it $K$-ary tree}  is a rooted tree such that each vertex has exactly $K$ children. Then all vertices except the root  of  a $K$-ary tree have degree $K+1$, and the root has degree $K$. In this paper we say that a tree is {\it regular} if it is a $K$-ary tree for some $K\geq 1$.

For $x\in X$, let $|x|$ be the distance from the root $0$ to $x$, that is, the length of the geodesic from $0$ to $x$, where the length of every edge is $1$ and we consider each edge to be an isometric copy of the unit interval. The geodesic connecting two points $x, y\in V$ is denoted by $[x, y]$, and its length is denoted $|x-y|$. If $|x|<|y|$ and $x$ lies on the geodesic connecting $0$ to $y$, we write $x<y$ and call $y$ a descendant of the point $x$. More generally, we write $x\leq y$ if the geodesic from $0$ to $y$ passes through $x$, and in this case $|x-y|=|y|-|x|$.

On our $K$-regular tree $X$, we define the metric $ds$ and measure $d\mu$ by setting
\begin{equation*}
d\mu=\mu(|x|)\,d|x|,\ \  ds(x)=\lambda(|x|)\, d|x|,
\end{equation*}
where $\lambda, \mu:[0, \infty)\rightarrow (0, \infty)$ with $\lambda, \mu\in L^1_{\rm loc}([0, \infty))$.   
Here $d\,|x|$ is the measure which gives each edge Lebesgue measure $1$, as we consider each edge to be an isometric copy of the unit interval and the vertices are the end points of this interval. Hence for any two points $z, y\in X$, the distance between them is 
\[d(z, y)=\int_{[z, y]} \,ds(x)=\int_{[z, y]}\lambda(|x|)\, d|x|,\]
where $[z, y]$ is the unique geodesic from $z$ to $y$ in $X$.

We abuse the notation and let $\mu(x)$ and $\lambda(x)$ denote $\mu(|x|)$ and $\lambda(|x|)$, respectively, for any  $x\in X$, if there is no danger of confusion.



Throughout the paper, 
we let 
\[B(x,r)=\{y\in X: d(x,y)<r\}
\]denote the (open) ball in $X$ with center $x$ and radius $r$,  and let $\sigma B(x,r)=B(x, \sigma r)$. Also  
\[F(x,r)=\{y\in X: x\in[0,y], d(x,y)<r\}
\] is the downward directed half ball. For any $x\in X$ and $r>0$, we denote by $\ox^r$ the point in $[0, x]$ with $d(\ox^r, x)=\min\{r, d(0,x)\}$ 
and denote by $\ux_r$ a point in $X$ such that $x\in [0, \ux_r]$ with $d(\ux_r, x)=r$. Hence $\ox^r$ is the ancestor of any point $y\in B(x, r)$. Usually, the choice of $\ux_r$ is not unique, but we will not specify it since the results and proofs in this paper are independent of the choice of $\ux_r$.
\subsection{Admissibility}\label{adm}
{Let $u\in L_{\rm loc}^1(X)$. We say that a Borel function $g: X\rarrow [0, \infty]$ is an {\it upper gradient} of $u$ if  
\begin{equation}\label{gradient}|u(z)-u(y)|\leq \int_{\gamma} g\, ds
\end{equation}
whenever $z, y\in X$ and $\gamma$ is the geodesic from $z$ to $y$. In the setting of a tree any rectifiable curve with end points $z$ and $y$ contains the geodesic connecting $z$ and $y$, and therefore the upper gradient defined above is equivalent to the definition which requires that inequality \eqref{gradient} holds for all rectifiable curves with end points $z$ and $y$. 
In \cite{H03,HKST15}, the notion of a $p$-weak upper gradient is given. A Borel function $g: X\rightarrow [0, \infty]$ is called a $p$-weak upper gradient of $u$ if \eqref{gradient} holds on $p$-a.e. curve. Here we say that a property holds for {\it $p$-a.e. curve} if it fails only for a rectifiable curve family $\Gamma$ with {\it zero $p$-modulus}, i.e., there is Borel function $0\leq \rho\in L^p(X)$ such that $\int_{\gamma}\rho\, ds=\infty$ for every curve $\gamma\in \Gamma$.
We refer to \cite{H03,HKST15} for more information about $p$-weak upper gradients.

The notion of upper gradients is due to Heinonen and Koskela \cite{HK}; we refer interested readers to \cite{BB11,H03,HKST15,N00} for a more detailed discussion on upper gradients.  

The Newtonian space $N^{1,p}(X)$, for  $1\leq p<\infty$, is defined as the collection  of the functions for which the given norm
 \[\| u\|_{N^{1,p}(X)}:=\left (\int_{X}|u|^pd\mu+ \inf_{g}\int_{X}|g|^pd\mu  \right )^{1/p}
 \]
 is finite, where the infimum is taken over all $p$-weak upper gradients $g$ of $u$.

A measure $\mu$ is doubling if there exists a positive constant $C_d$ such that for all balls $B(x,r)$  with $x\in X$ and $r>0$, 
\begin{equation}\label{doubling}
\mu(B(x,2r))\leq C_d\mu(B(x,r)),
\end{equation}
where the constant $C_d$ is called the {\it doubling constant}.

$(X, d, \mu)$ supports a $(1, p)$-Poincar\'e inequality  if there exist  positive constants $C_P>0$ and $\sigma\geq 1$ such that for all balls $B(x,r)$ with $x\in X$ and $r>0$, every integrable function $u$ on $\sigma B(x,r)$ and all upper gradients $g$,
\begin{equation}\label{poincare}
\dashint_{B(x,r)}|u-u_{B(x,r)}|\,d\mu \leq C_Pr\left (\dashint_{\sigma B(x,r)}g^p\,d\mu \right )^{1/p}
\end{equation}
where  $u_B:=\dashint_{B}u\, d\mu=\frac{1}{\mu(B)}\int_B u\,d\mu$. We say that $\mu$ is {\it $p$-admissible} if $\mu$ is a doubling measure and $(X, d,\mu)$ supports a $(1, p)$-Poincar\'e inequality.}

The doubling property \eqref{doubling} and $(1, p)$-Poincar\'e inequality \eqref{poincare} can be defined on general metric measure spaces. In particular, on $\mathbb R^n$, in view of \cite[Theorem 2]{Keith03} or \cite[Theorem 8.4.2]{HKST15}, the $(1, p)$-Poincar\'e inequality \eqref{poincare} is equivalent to the $(1, p)$-Poincar\'e inequality given in the Introduction. It perhaps worth to point out that, since our $K$-regular trees are geodesic spaces, if $\mu$ is $p$-admissible, the dilation constant $\sigma$ in \eqref{poincare} can be taken to $1$, see \cite{PP1995} and \cite{HK00}.
\section{Proof of Theorem \ref{main theorem} for $K\geq 2$}\label{proof_2}
In this section, 
we give the proof of Theorem \ref{main theorem} for $K\geq 2$. To do so, we establish the following lemmas.
\begin{lem}\label{Lemma-tree-1}  Let $1\leq p<\infty$ and $X$ be a $K$-regular tree with distance $d$ and measure $\mu$ where $K\geq 1$. Assume that $\mu$ satisfies the \Ap-condition. Then $\mu$ is $p$-admissible.
\end{lem}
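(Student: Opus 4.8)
The plan is to extract both defining properties of $p$-admissibility---the doubling property and the $(1,p)$-Poincar\'e inequality---from the single scalar bound $\sup_{x,r}\text{A}_p(x,r)=:A<\infty$ supplied by the \Ap-condition. The geometric backbone is the branching identity
\[\mu(F(x,r))=\int_{[x,\ux_r]}K^{j(w)-j(x)}\mu(w)\,d|w|=\int_{[x,\ux_r]}h(w)\,ds(w),\qquad h(w):=\frac{K^{j(w)-j(x)}\mu(w)}{\lambda(w)},\]
obtained by slicing the half-ball $F(x,r)$ into its $K^{j(w)-j(x)}$ branches at each level. The point is that $h$ is exactly the integrand whose conjugate power $h^{1/(1-p)}$ appears in $\text{A}_p(x,r)$, so this identity is what links the measure of half-balls to the \Ap-quantity.

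First I would record the fundamental comparison $\mu(F(\ox^r,2r))\le 2A\,\mu(F(x,r))$. Applying H\"older with exponents $p,p'$ to $r=\int_{[x,\ux_r]}h^{1/p}h^{-1/p}\,ds$ gives $r^p\le \mu(F(x,r))\big(\int_{[x,\ux_r]}h^{-1/(p-1)}\,ds\big)^{p-1}$; substituting this lower bound into \eqref{ap} shows $\text{A}_p(x,r)\ge\mu(F(\ox^r,2r))/\big(2\mu(F(x,r))\big)$, and $\text{A}_p(x,r)\le A$ yields the claim (for $p=1$ one replaces H\"older by the trivial $r\le\mu(F(x,r))\,\esssup h^{-1}$ and uses the $\text{A}_1$-form \eqref{a1}). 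Since $F(x,r)\subseteq B(x,r)\subseteq F(\ox^r,2r)$, this delivers the three-way comparison $\mu(F(x,r))\approx\mu(B(x,r))\approx\mu(F(\ox^r,2r))$ with constants depending only on $A$. Doubling then follows: writing $I_1(r)=\mu(F(x,r))=\int_0^rH\,dt$ and $I_2(r)=\int_0^rH^{-1/(p-1)}\,dt$ along the downward ray from $x$ (arclength $t$, $H(t)=h(w(t))$), the comparison and H\"older give $I_1(r)\approx r^p/I_2(r)^{p-1}$, and since $I_2$ is nondecreasing this forces $I_1(2r)\le 2^{p+1}A\,I_1(r)$, i.e.\ doubling of the half-balls; combining this with the comparison at the scales $r$ and $2r$ upgrades it to $\mu(B(x,2r))\le C\,\mu(B(x,r))$.

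The heart of the argument is the Poincar\'e inequality, and I would prove it first on the downward half-balls and then transfer. For $y\in F(x,r)$ the geodesic upper-gradient inequality gives $|u(y)-u(x)|\le\int_{[x,y]}g\,ds$; integrating over $F(x,r)$ and using Fubini produces $\int_{F(x,r)}g(w)\,\mu\{y\in F(x,r):y\ge w\}\,ds(w)$ with weight bounded by $\mu\big(F(w,r-d(x,w))\big)$. After H\"older and a branch-decomposition the dual factor becomes $\int_{[x,\ux_r]}K^{j(w)-j(x)}\mu(F(w,r-d(x,w)))^{p'}\lambda^{p'}\mu^{-1/(p-1)}\,d|w|$, and here the decisive step is the packing bound
\[K^{j(w)-j(x)}\,\mu\!\big(F(w,\,r-d(x,w))\big)\le\mu(F(x,r)),\]
valid because the $K^{j(w)-j(x)}$ half-balls $F(w',r-d(x,w))$ over the level-$|w|$ descendants $w'$ of $x$ are pairwise disjoint and all contained in $F(x,r)$. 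This converts the dual factor into precisely the $\text{A}_p(x,r)$-integral, and feeding in $\text{A}_p(x,r)\le A$ yields $\dashint_{F(x,r)}|u-u(x)|\,d\mu\le (2A)^{1/p}r\big(\dashint_{F(x,r)}g^p\,d\mu\big)^{1/p}$ with \emph{no} appeal to doubling. To pass to a genuine ball $B(x,r)$ I would set $v=\ox^r$, apply the half-ball inequality at $(v,2r)$, and control the passage via $B(x,r)\subseteq F(v,2r)\subseteq B(x,3r)$, the comparison $\mu(F(v,2r))\le 2A\,\mu(B(x,r))$, and doubling, obtaining the $(1,p)$-Poincar\'e inequality with dilation $\sigma=3$ (enough for admissibility, and reducible to $\sigma=1$ by the geodesic remark in Section~\ref{adm}).

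I expect the main obstacle to be bookkeeping the branching factor $K^{j(w)-j(x)}$ correctly when moving between single-geodesic integrals (the form in which $\text{A}_p$ is phrased) and honest integrals over the tree: the conjugate exponents match \emph{only} because of the disjoint-packing bound above, and a cruder estimate---for instance comparing to $\mu(F(w,2r))$ over all of $F(\ox^r,2r)$---breaks the balance between $K^{j(w)-j(x)}$ and $K^{-(j(w)-j(x))/(p-1)}$ and produces a divergent dual factor. The secondary nuisance is the family of near-root configurations $d(0,x)<r$, where $\ox^r=0$ and the half-ball/ball dictionary must be re-examined, although the same three inequalities continue to apply there.
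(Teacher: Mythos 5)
Your proposal is correct: I checked the three pillars and they all hold with the constants you claim, namely the comparison $\mu(F(\ox^r,2r))\le 2A\,\mu(F(x,r))$ (H\"older applied to $r=\int_{[x,\ux_r]}h^{1/p}h^{-1/p}\,ds$ inside \eqref{ap}, with the $L^1$--$L^\infty$ variant and \eqref{a1} for $p=1$), the packing bound $K^{j(w)-j(x)}\mu(F(w,r-d(x,w)))\le\mu(F(x,r))$ (disjointness plus radial symmetry of $\lambda,\mu$), and the resulting doubling-free half-ball inequality $\dashint_{F(x,r)}|u-u(x)|\,d\mu\le(2A)^{1/p}r\bigl(\dashint_{F(x,r)}g^p\,d\mu\bigr)^{1/p}$. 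Your skeleton is the same as the paper's --- the branching identity plus H\"older for doubling (your $I_1(r)\gtrsim r^p/I_2(r)^{p-1}$ with $I_2$ nondecreasing is a repackaging of \eqref{eq1-lem41}--\eqref{label1}), and the triangle-inequality/upper-gradient/Fubini reduction \eqref{3-lem41}--\eqref{4-lem41} for Poincar\'e --- but you organize the Poincar\'e step genuinely differently. The paper works directly on the ball $B(x,r)$ and bounds the Fubini weight crudely via \eqref{eq11-lem41}, anchored at $\ox^r$ with the full radius: $\frac{\lambda(w)}{\mu(w)}\mu(\{y\in B(x,r):w\in[0,y]\})\le\frac{\lambda(w)}{K^{j(w)-j(\ox^r)}\mu(w)}\mu(F(\ox^r,2r))$; after H\"older it must then invoke the already-proved doubling (through $F(\ox^r,2r)\subset B(x,4r)$ and $B(x,r)\subset F(\ox^{3r},4r)$) to absorb the measure ratio in \eqref{eq13-lem41}, and in exchange it gets dilation $\sigma=1$ directly. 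You instead keep the sharp weight $\mu(F(w,r-d(x,w)))$ anchored at $x$, which is what makes the conjugate powers of $K^{j(w)-j(x)}$ cancel exactly as you say --- this is the same disjointness mechanism hidden in \eqref{eq11-lem41}, just localized --- and you obtain a Poincar\'e inequality on half-balls with \emph{no} doubling hypothesis, transferring to balls only at the end via $v=\ox^r$, $\mu(F(v,2r))\le 2A\,\mu(B(x,r))$ and $F(v,2r)\subset B(x,3r)$, which costs you $\sigma=3$; that suffices for $p$-admissibility as defined in \eqref{poincare}, and improves to $\sigma=1$ on geodesic spaces as the paper itself notes. Your two flagged caveats are handled correctly: the exponent balance is precisely why the cruder comparison you warn against would diverge, and in the near-root case $\ox^r=0$ one has $F(0,2r)=B(0,2r)$ so all three inclusions persist unchanged. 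Net comparison: your route yields a cleaner, quantitatively explicit intermediate statement on half-balls independent of doubling; the paper's route avoids any appeal to self-improvement of the dilation constant.
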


\begin{proof}
For $1\leq p<\infty$, let
	\[C_A:=\sup\left\{\text{A}_\text{p}(x, r): x\in X, r>0\right\}.\]
	Since $\mu$ satisfies the \Ap -condition, $0<C_A<\infty$. 

\underline{\bf Case $p=1$:}
	We first show that $\mu$ is a doubling measure. 
Let $x\in X$ and $r>0$ be arbitrary. Notice that $\text{A}_\text{1}(x, 2r)\leq C_A$. Then it follows from  \eqref{a1} that
\[\esssup_{w\in {[x,\, \underline{x}_{2r}]}}\frac{\lambda(w)}{K^{j(w)-j(x)}\,\mu(w)}\leq \frac{4rC_A}{\mu(F(\bar{x}^{2r},4r))}.\] 
Hence
\begin{align}\notag
r=\int_{[x,\underline{x}_r]}\, ds=& \int_{[x,\underline{x}_r]}\left (\frac{K^{j(w)-j(x)}\mu(w)}{\lambda(w)}\right )\left (\frac{\lambda(w)}{K^{j(w)-j(x)}\mu(w)}\right )ds(w)\\
&\leq \left (\int_{[x,\underline{x}_r]}\frac{K^{j(w)-j(x)}\mu(w)}{\lambda(w)}ds(w)\right ) \left (\frac{4rC_A}{\mu(F(\bar{x}^{2r},4r))}\right ).\label{0-lem41}
\end{align}
Notice that
\begin{equation*}
\int_{[x,\underline{x}_r]}\frac{K^{j(w)-j(x)}\mu(w)}{\lambda(w)}ds(w) =\mu(F(x,r))\leq \mu(B(x,r))
\end{equation*}
and that
\begin{equation*}
\mu(F(\bar{x}^{2r},4r))\geq \mu(B(x,2r)).
\end{equation*}
It follows from estimate \eqref{0-lem41} that 
\[r\leq 4C_Ar \frac{\mu(B(x,r))}{\mu(B(x,2r))},
\]
which proves that $\mu$ is a doubling measure with doubling constant $4C_A$ since $r>0$ and the pair $(x, r)$ is arbitrary.

Next we prove that $(X, d, \mu)$ supports a $(1, 1)$-Poincar\'e inequality. Consider an arbitrary ball $B(x,r)$ with $x\in X$ and $r>0$.
 By the triangle inequality, we obtain that
\begin{equation}\label{3-lem41}\dashint_{B(x,r)}|u-u_{B(x,r)}|d\mu \leq 2\dashint_{B(x,r)} |u(y)-u(\bar {x}^r)|d\mu(y)
\end{equation}
for the left-hand side of our Poincar\'e inequality.
By the definition of upper gradients and the Fubini theorem, for any upper gradient $g_u$ of $u$, the right-hand side of $(\ref{3-lem41})$ rewrites  as 
\begin{align}\notag 2\dashint_{B(x,r)}|u(y)-u(\bar {x}^r)|d\mu(y)&\leq 2\dashint_{B(x,r)}\int_{[\bar {x}^r,y]}g_u(w) ds(w)d\mu(y)\\
&=2\dashint_{B(x,r)}g_u(w) \frac{\lambda(w)}{\mu(w)} \left(\int_{B(x, r)} \chi_{[\ox^r, y]}(w)\, d\mu(y)\right)\, d\mu(w)\notag\\
\label{4-lem41} &=2\dashint_{B(x,r)}g_u(w) \frac{\lambda(w)}{\mu(w)}\mu(\{y\in B(x,r):w\in[0,y]\})d\mu(w).
\end{align}
Here the last equality holds since $\chi_{[\ox^r, y]}(w)$ is not zero only if $w\in [0,y]$.

Since the measure $\mu$ satisfies the $\text{A}_\text{1}$-condition, $\text{A}_\text{1}(\ox^r, 2r)<C_A$. It follows from \eqref{a1} that
\[
\frac{\mu(F(\ox^{3r}, 4r))}{4r}\cdot \esssup_{w\in [\ox^r, \ux_r]}\frac{\lambda(w)}{K^{j(w)-j(\ox^r)}\mu(w)}\leq C_A.
\]
Combining with the fact that $K^{j(\bar {x}^{3r})}\leq K^{j(\bar {x}^{r})}$, we obtain that
\begin{align}
\notag \frac{\lambda(w)}{\mu(w)}\mu(\{y\in B(x,r): w\in[0,y]\})&= \frac{\lambda(w)}{\mu(w)}\int_{\{y\in [w,\underline{w}_r]\cap B(x,r)\}} \frac{{K^{j(y)-j(w)}\mu(y)}}{\lambda(y)}ds(y)\\
&\leq\frac{\lambda(w) K^{j(\bar {x}^{3r})}}{\mu(w)K^{j(w)}}\int_{[\bar {x}^{3r}, \underline{x}_r]}\frac{K^{j(y)-j(\bar {x}^{3r})}\mu(y)}{\lambda(y)}ds(y) \notag \\
&\leq \frac{\lambda(w)}{\mu(w)K^{j(w)-j(\bar {x}^{r})}}\mu(F(\bar {x}^{3r},4r))\leq 4C_Ar \label{6-lem41}
\end{align} 
for any  $w\in B(x,r)$. Combining \eqref{3-lem41}-\eqref{6-lem41}, yields
\[\dashint_{B(x,r)}|u-u_{B(x,r)}|d\mu\leq 8C_Ar\dashint_{B(x,r)}g_u\, d\mu
\]for all balls $B(x,r)$.

\underline{\bf Case $p>1$:}
	Let us first prove that $\mu$ is a doubling measure. Let $B(x,r)$ be an arbitrary ball in $X$.
Since $\mu$ satisfies the \Ap-condition, we have $\text{\Ap}(x,2r)\leq C_A$, and hence
\begin{equation}
\label{eq1-lem41} \frac{\mu(F(\bar {x}^{2r},4r))}{4r} \cdot \left[\frac{1}{2r}\int_{[x,\underline{x}_{2r}]}\left (\frac{K^{j(w)-j(x)}\mu(w)}{\lambda(w)}\right)^{\frac{1}{1-p}}ds(w)\right]^{p-1}\leq C_A .
\end{equation} 
A simple calculation using the H\"older inequality shows that 
\begin{align}\notag
r=& \int_{[x,\underline{x}_r]}\left (\frac{K^{j(w)-j(x)}\mu(w)}{\lambda(w)}\right )^{1/p}\left (\frac{K^{j(w)-j(x)}\mu(w)}{\lambda(w)}\right )^{-1/p}ds(w)\\\notag
&\leq \left (\int_{[x,\underline{x}_r]}\frac{K^{j(w)-j(x)}\mu(w)}{\lambda(w)}ds(w)\right )^{1/p} \left [\int_{[x,\underline{x}_r]}\left (\frac{K^{j(w)-j(x)}\mu(w)}{\lambda(w)}\right )^{\frac{1}{1-p}}ds(w)\right ]^{\frac{p-1}{p}}\\
&\leq \mu(F(x,r))^{{1}/{p}} (2r)^{\frac{p-1}{p}}\left [\frac{1}{2r}\int_{[x,\underline{x}_{2r}]}\left (\frac{K^{j(w)-j(x)}\mu(w)}{\lambda(w)}\right )^{\frac{1}{1-p}}ds(w)\right ]^{\frac{p-1}{p}}.\notag
\end{align} 
Inserting  \eqref{eq1-lem41} into the above estimate yields 
\begin{equation}\label{label1}
r\leq (2r)^{\frac{p-1}{p}}\mu(F(x,r))^{1/p}\left [\frac{\mu(F(\bar{x}^{2r},4r))}{4rC_A}\right ]^{\frac{-1}{p}}={C_A}^{1/p}2^{\frac{p+1}{p}}r\left (\frac{\mu(F(x,r))}{\mu(F(\bar{x}^{2r},4r))}\right )^{1/p}.
\end{equation}
Note that $\mu(F(x,r))\leq \mu(B(x,r))$ and $\mu(F(\bar{x}^{2r},4r))\geq \mu(B(x,2r))$. Then the estimate \eqref{label1} implies that
\[r\leq {C_A}^{1/p}2^{\frac{p+1}{p}}r\left (\frac{\mu(B(x,r))}{\mu(B(x,2r))}\right )^{1/p},
\] 
which gives that $\mu$ is a doubling measure with doubling constant $C_A2^{p+1}$, since $r>0$ and $B(x, r)$ is arbitrary.

Next we show that $(X, d, \mu)$ supports a $(1, p)$-Poincar\'e inequality. Suppose $B(x,r)$ is an arbitrary ball with center $x\in X$ and radius $r>0$. 
 Since the measure $\mu$ satisfies the $\text{A}_\text{p}$-condition, then $\text{A}_\text{p}(\ox^r, 2r)<C_A$. It follows from \eqref{ap} that 
\begin{equation}
\frac{\mu(F(\ox^{3r},4r))}{4r}\cdot \left [\frac{1}{2r}\int_{[\ox^r,\ux_r]}\left(\frac{K^{j(w)-j(\ox^r)}\mu(w)}{\lambda(w)}\right)^{\frac{1}{1-p}}ds(w)\right ]^{{p-1}}\leq C_A\label{eq8-lem41}
\end{equation}

Recall that the left-hand side of our Poincar\'e inequality can be estimated by \eqref{4-lem41}. 
A simple calculation  shows that 
\begin{align}
\notag \frac{\lambda(w)}{\mu(w)}\mu(\{y\in B(x,r):w\in[0,y]\})&=\frac{\lambda(w)}{\mu(w)}\int_{\{y\in [w,\underline{w}_r]\cap B(x,r)\}}\frac{K^{j(y)-j(w)}\mu(y)}{\lambda(y)}ds(y)\\\notag
&\leq \frac{\lambda(w)}{\mu(w)K^{j(w)-j(\ox^{r})}}\int_{[\ox^{r},\ux_r]}\frac{K^{j(y)-j(\ox^{r})}\mu(y)}{\lambda(y)}ds(y)\\ \label{eq11-lem41}
&=\frac{\lambda(w)}{\mu(w)K^{j(w)-j(\ox^{r})}}\mu(F(\ox^{r},2r))
\end{align} 
for any point $w\in B(x,r)$. Inserting the estimate \eqref{eq11-lem41} into \eqref{4-lem41} yields that
\begin{align}
\notag \dashint_{B(x,r)}|u-u_{B(x,r)}|d\mu\leq 2\left ( \dashint_{B(x,r)}g_u(w)\frac{\lambda(w)}{\mu(w)K^{j(w)-j(\ox^{r})}}d\mu(w)\right )\mu(F(\ox^{r},2r)).
\end{align}Applying the H\"older inequality for the right-hand side of the above inequality, it follows that 
\begin{align}\notag
&\dashint_{B(x,r)}|u-u_{B(x,r)}|d\mu \\
\leq &2 \left (\dashint_{B(x,r)}{g_u}^p d\mu\right )^{1/p}\left [\dashint_{B(x,r)}\left (\frac{\lambda(w)}{K^{j(w)-j(\ox^{r})}\mu(w)}\right )^{\frac{p}{p-1}}d\mu(w)\right ]^{\frac{p-1}{p}}\mu(F(\ox^{r},2r)).\label{eq12-lem41}
\end{align}
By using the estimate \eqref{eq8-lem41}, we obtain that
\begin{align}
\notag & \left [\dashint_{B(x,r)} \left (\frac{\lambda(w)}{K^{j(w)-j(\ox^{r})}\mu(w)}\right )^{\frac{p}{p-1}}d\mu(w)\right ]^{\frac{p-1}{p}}\mu(F(\ox^{r},2r))\\
\notag 
\leq& \frac{\mu(F(\ox^{r},2r))}{\mu(B(x,r))^{\frac{p-1}{p}}}\left [\int_{F(\ox^r, 2r)}\left (\frac{\lambda(w)}{K^{j(w)-j(\ox^{r})}\mu(w)}\right )^{\frac{p}{p-1}}d\mu(w)\right ]^{\frac{p-1}{p}}\\
\notag \leq & \frac{\mu(F(\ox^{r},2r))}{\mu(B(x,r))^{\frac{p-1}{p}}}(2r)^{\frac{p-1}{p}}\left [\frac{1}{2r}\int_{[\ox^{r},\ux_r]}\left (\frac{K^{j(w)-j(\ox^{r})}\mu(w)}{\lambda(w)}\right )^{\frac{1}{1-p}}ds(w)\right ]^{\frac{p-1}{p}}\\
 \notag \leq & \frac{\mu(F(\ox^{r},2r))}{\mu(B(x,r))^{\frac{p-1}{p}}}(2r)^{\frac{p-1}{p}} \left [\frac{\mu(F(\ox^{3r},4r))}{4rC_A}\right ]^{\frac{-1}{p}}\\ 
 =&{C_A}^{1/p}2^{\frac{p+1}{p}}r\frac{\mu(F(\ox^{r},2r))}{\mu(B(x,r))^{\frac{p-1}{p}}\mu(F(\ox^{3r},4r))^{1/p}}.\label{eq13-lem41}
\end{align}
Note that $F(\ox^{r},2r))\subset B(x, 4r)$ and that $B(x, r)\subset F(\ox^{3r},4r)$.
Since $\mu$ is a doubling measure with doubling constant $C_A2^{p+1}$, we have that 
\[\frac{\mu(F(\ox^{r},2r))}{\mu(B(x,r))^{\frac{p-1}{p}}\mu(F(\ox^{3r},4r))^{1/p}}\leq \frac{\mu(B(x,4r))}{\mu(B(x,r))}\leq  (C_A 2^{p+1})^2.
\]
Inserting the above estimate into the estimate $(\ref{eq13-lem41})$, we have 
\begin{equation}
\label{eq14-lem41} \left [\dashint_{B(x,r)} \left (\frac{\lambda(w)}{K^{j(w)-j(\ox^{r})}\mu(w)}\right )^{\frac{p}{p-1}}d\mu(w)\right ]^{\frac{p-1}{p}}\mu(F(\ox^{r},2r))\leq {C_A}^{2+\frac{1}{p}}2^{\frac{p+1}{p}+2(p+1)}r.
\end{equation}
Thanks to the estimates $(\ref{eq12-lem41})$ and $(\ref{eq14-lem41})$, we obtain
\[\dashint_{B(x,r)}|u-u_{B(x,r)}|d\mu \leq {C_A}^{2+\frac{1}{p}}2^{\frac{1}{p}+2p+4}r\left (\dashint_{B(x,r)}{g_u}^p\, d\mu\right )^{\frac{1}{p}}
\]
for all balls $B(x,r)$.
\end{proof}
\begin{lem}\label{Lemma-tree-2}
	Let $1\leq p<\infty$ and $X$ be a $K$-regular tree with distance $d$ and measure $\mu$ where $K\geq 2$. Suppose that $\mu$ is $p$-admissible. Then $\mu$ satisfies the \Ap-condition.
\end{lem}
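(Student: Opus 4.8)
The plan is to establish the converse direction of Lemma~\ref{Lemma-tree-1}: assuming $p$-admissibility (doubling plus $(1,p)$-Poincar\'e inequality), I want to extract a uniform bound on $\text{A}_\text{p}(x,r)$ over all $x\in X$ and $r>0$. The natural strategy is to test the Poincar\'e inequality against a carefully chosen function $u$ supported on the ``half ball'' $F(\ox^r, 2r)$, designed so that its upper gradient is concentrated on the geodesic $[x,\ux_r]$ and so that the two factors appearing in $\text{A}_\text{p}(x,r)$ emerge naturally from the two sides of the inequality. Specifically, for $K\geq 2$ one exploits the branching: the root has more than one child, so a half ball $F(\ox^r,2r)$ carries substantial mass off the geodesic $[x,\ux_r]$, and this is exactly what lets one bound the average oscillation from below by a fixed fraction.

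First I would fix $x\in X$ and $r>0$, set $z=\ox^r$, and consider the ball $B(z,2r)$ (or a comparable ball that contains $F(\ox^r,2r)$ and is contained in a bounded dilate of it). I would define a test function $u$ that equals a constant on one branch emanating from $z$ and varies along the geodesic $[x,\ux_r]$ in the optimal way dictated by the Euler--Lagrange heuristic for the $p$-capacity, namely with
\[
u(w)=\int_{[\ox^r,\,w]}\left(\frac{K^{j(v)-j(\ox^r)}\mu(v)}{\lambda(v)}\right)^{\frac{1}{1-p}}\frac{ds(v)}{\lambda(v)}
\]
along the downward geodesic, extended to be locally constant across the remaining branches. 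The upper gradient of $u$ along the geodesic is then $g_u(w)=\big(K^{j(w)-j(\ox^r)}\mu(w)/\lambda(w)\big)^{\frac{1}{1-p}}/\lambda(w)$, and the key point is that $\int_{B} g_u^p\,d\mu$ collapses, after accounting for the $K^{j(w)}$ branching factor in $d\mu$, to a constant multiple of the very integral $\int_{[x,\ux_r]}(K^{j(w)-j(x)}\mu(w)/\lambda(w))^{\frac{1}{1-p}}\,ds(w)$ that appears in the definition of $\text{A}_\text{p}(x,r)$.

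Next I would estimate the left-hand side of the Poincar\'e inequality from below. Because $u$ is constant (say equal to $0$) on a full sibling branch of positive $\mu$-measure and attains value comparable to the capacity integral along $[x,\ux_r]$, the mean oscillation $\dashint_B|u-u_B|\,d\mu$ is bounded below by a fixed fraction of the total variation, using that $K\geq 2$ guarantees the ``constant side'' carries a definite proportion of $\mu(B)$ via doubling. Combining the lower bound on the left with the computed upper gradient integral on the right, then raising to the appropriate power and rearranging, produces precisely the product $\mu(F(\ox^r,2r))/(2r)$ times the $(p-1)$-power of the averaged capacity integral, i.e.\ $\text{A}_\text{p}(x,r)$, bounded by a constant depending only on $C_P$, $\sigma$, and the doubling constant $C_d$. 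The case $p=1$ is handled analogously with the $\esssup$ replacing the integral, testing against the indicator-type function that jumps across the point of $[x,\ux_r]$ realizing the essential supremum.

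\textbf{The main obstacle} I anticipate is the lower bound on the mean oscillation: one must verify that the ``constant branch'' really does carry a uniformly positive share of $\mu(B(z,2r))$, independent of $x$ and $r$, so that $\dashint_B|u-u_B|\,d\mu$ does not degenerate. This is exactly where $K\geq 2$ is essential and where the argument must differ from the $K=1$ case---for $K=1$ there is no sibling branch and one is forced to the weaker ``$\text{A}_\text{p}$ far from $0$'' conclusion. Making the geometry precise (identifying the sibling subtree, controlling its measure via doubling applied to balls centered at $z$, and ensuring $F(\ox^r,2r)$ and $B(z,2r)$ are comparable in measure) will require care, and a secondary technical point is checking that the integer-valued functions $j(w)$ and the branching factors $K^{j(w)-j(x)}$ telescope correctly so that the right-hand side matches the definition \eqref{ap} exactly rather than only up to the desired constant.
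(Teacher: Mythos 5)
Your overall strategy is the same as the paper's: test the $(1,p)$-Poincar\'e inequality with the capacity-extremal function along a downward geodesic, kept constant off one chosen branch, lower-bound the mean oscillation using $K\ge 2$ together with doubling, and rearrange to recover $\text{A}_\text{p}$. (The paper bases the construction at $x$ and the subtree $T_1$ through the nearest vertex below $x$, rather than at $\ox^r$ as you do; this is a cosmetic difference that doubling absorbs, exactly as in the paper's final step $\mu(F(\ox^{r/2},r))\le C_d\,\mu(B(x,r))$.) However, one central step fails as written: your normalization of the test function is wrong. You integrate $\bigl(K^{j(v)-j(\ox^r)}\mu(v)/\lambda(v)\bigr)^{1/(1-p)}$ against $ds(v)/\lambda(v)=d|v|$ and take $g_u=(\cdots)^{1/(1-p)}/\lambda$. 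Writing $Q=K^{j-j(\ox^r)}\mu/\lambda$ and converting subtree integrals to geodesic integrals via $d\mu = Q\lambda\,d|v|$ (with multiplicity), one gets
\begin{equation*}
\int g_u^p\,d\mu=\int Q^{\frac{p}{1-p}}\lambda^{-p}\,Q\lambda\,d|v|=\int Q^{\frac{1}{1-p}}\lambda^{1-p}\,d|v|,
\end{equation*}
whereas the integral in \eqref{ap} is $\int Q^{\frac{1}{1-p}}\lambda\,d|v|$ and the total increment of your $u$ is $\int Q^{\frac{1}{1-p}}\,d|v|$. These three quantities agree only when $\lambda$ is constant (or $p=1$), so the claimed ``collapse'' of $\int g_u^p\,d\mu$ to the $\text{A}_\text{p}$ integral is false for general $\lambda$, and so is the extremality heuristic you invoke. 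The fix is to integrate against $ds$ itself, as the paper does: $u(y)=\int_{[x,y]}Q^{1/(1-p)}\,ds$ with $g_u=Q^{1/(1-p)}\chi_{F(x,r/2)}$, for which $\int g_u^p\,d\mu=\int_{[x,\ux_{r/2}]}Q^{1/(1-p)}\,ds=b$ equals the increment exactly.

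Two further points need repair. Your oscillation lower bound uses only the constant branch; if $u_{B(x,r)}$ happens to lie near the value $0$ taken there, that branch contributes nothing. You need definite mass on \emph{both} level sets, i.e.\ the paper's pair $E_1=B(x,r)\setminus T_1$ (where $u=0$) and $E_2=T_1\cap F(x,r)\setminus F(x,r/2)$ (where $u=b$): since $|u-u_{B(x,r)}|\ge b/2$ on at least one of them, and each satisfies $\mu(E_i)\ge C_d^{-4}K^{-1}\mu(B(x,r))$ (via the balls $B(z,r/2)$ with $z\notin T_1$ and $B(\ux_{3r/4},r/4)$, plus the $K$-fold branching), one obtains $\dashint_{B(x,r)}|u-u_{B(x,r)}|\,d\mu\ge b/(2C_d^4K)$. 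Finally, for $p=1$ no point need realize the essential supremum, so ``jumping across the point realizing it'' is not available; instead one tests with $u=\int_{[x,\cdot]}\chi_{E_\varepsilon}\,ds$, where $E_\varepsilon$ is the positive-measure set on which $K^{j-j(x)}\mu/\lambda$ is within $\varepsilon$ of its essential infimum over $[x,\ux_{r/2}]$, and lets $\varepsilon\to 0$, as in the paper. With these corrections your argument coincides with the paper's proof.
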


\begin{proof}
Let $x\in X$ and $r>0$ be arbitrary. Let $\varepsilon$ be an arbitrary positive number. Let $x_1\in X$ be a closest vertex of $x$ with $|x_1|>|x|$. Then we define 
\[T_{x_1}:=\{y\in X: x_1\in[0,y]\}\ \ \ \text{and}\ \ \ T_1:=[x, x_1]\cup T_{x_1}\]
Since $\mu$ is $p$-admissible, we may assume that $\mu$ satisfies the doubling condition \eqref{doubling} and the $(1, p)$-Poincar\'e inequality \eqref{poincare}.

\underline{\bf Case $p=1$:} 
 Let 
\[m=\essinf_{w\in [x, \ux_{\frac{r}{2}}]}\frac{K^{j(w)-j(x)}\mu(w)}{\lambda(w)}.
\]
In order to test the $(1,1)$-Poincar\'e inequality \eqref{poincare}, we define
\[u(y)=\begin{cases}0 & \text{ if }y\in X\setminus T_1,\\
\int_{[x,y]}{\chi_{E_{\varepsilon}}(w)}ds(w)& \text{ if }y\in F(x,r/2)\cap T_1,\\
a&\text{otherwise}	
\end{cases}
\]where $E_\varepsilon:=\left \{w\in F(x,{\frac{r}{2}}): \frac{K^{j(w)-j(x)}\mu(w)}{\lambda(w)}<m+\varepsilon\right \}$ and $a=\int_{[x,\ux_{\frac{r}{2}}]}\chi_{E_{\varepsilon}}(w)\,ds(w)$.
Note that $E_\varepsilon$ is a non-empty set by the definition of $m$ and that 
\begin{equation*}
r>a=\int_{[x,\ux_{\frac{r}{2}}]}\chi_{E_{\varepsilon}}(w)ds(w)>0.
\end{equation*}
By the definition of $u$, we obtain that  $g_u:=\chi_{E_{\varepsilon}}$ is an upper gradient of $u$.
Hence the right-hand side of the $(1,1)$-Poincar\'e inequality \eqref{poincare} is 
\begin{align*}
C_Pr\dashint_{\sigma B(x,r)}g_u\, d\mu&=C_P r\dashint_{\sigma B(x,r)}\chi_{E_\varepsilon}(w)d\mu(w)\\
&=\frac{C_P r}{\mu(\sigma B(x,r))}\int_{F(x,r/2)}\chi_{E_\varepsilon}(w)d\mu(w)\\
&=\frac{C_P r}{\mu(\sigma B(x,r))}\int_{[x,\ux_{\frac{r}{2}}]} \chi_{E_\varepsilon}(w)\frac{K^{j(w)-j(x)}\mu(w)}{\lambda(w)}\,ds(w).
\end{align*}
Here the second equality holds since $\chi_{E_\varepsilon}(w)$ is non-zero only if $w\in F(x, r/2)$.
Note that  $\mu(\sigma B(x,r))\geq \mu(B(x,r))$.  Then it follows from the definition of $E_\varepsilon$ that 
\begin{equation}
C_P r\dashint_{\sigma B(x,r)}g_u\, d\mu\leq \frac{C_P r}{\mu(B(x,r))}(m+\varepsilon)a.\label{lemma3.22}
\end{equation}
Let 
\begin{equation}\label{E1E2}
E_1:=B(x,r)\setminus T_1\ \ \text{and}\ \ E_2:=T_1\cap F(x,r)\setminus F(x,r/2).
\end{equation}
Note that $u\equiv 0$ on $E_1$ and $u\equiv a$ on $E_2$. 
Hence, at least one of the following holds:
\begin{equation}\label{lemma3.23}
|u-u_{B(x,r)}|\geq \frac{a}{2} \ \text{on}\ E_1\ \ \ \text{or}\ \ \ |u-u_{B(x,r)}|\geq \frac{a}{2} \ \text{on}\ E_2.
\end{equation}
Since $K\geq 2$, then $E_1$ and $E_2$ are not empty. Notice that  $K\mu(E_2)\geq \mu(F(x,r)\setminus F(x,r/2))$. 
Furthermore, the doubling property of $\mu$ gives
\[K\mu(E_2)\geq \mu(F(x,r)\setminus F(x,r/2))\geq \mu(B(\ux_{\frac{3r}{4}},r/4))\geq {C_d}^{-4}\mu(B(\ux_{\frac{3r}{4}},4r))\geq {C_d}^{-4}\mu(B(x,r))
\]
and
\[\mu(E_1)\geq \mu(B(z, r/2))\geq {C_d}^{-3}\mu(B(z, 4r))\geq {C_d}^{-3}\mu(B(x,r)),
\]
for some $z\notin T_1$ with $d(x, z)=r/2$.
Consequently, 
\begin{equation}\label{lemma3.24}
\min\{\mu(E_1), \mu(E_2)\}\geq {C_d}^{-4}K^{-1}\mu(B(x,r)).
\end{equation}
Then it follows from \eqref{lemma3.23} and \eqref{lemma3.24} that the left-hand side of the $(1,1)$-Poincar\'e inequality \eqref{poincare} is
\begin{align}\notag
\dashint_{B(x,r)}|u-u_{B(x,r)}|d\mu&\geq \frac{1}{\mu(B(x,r))}\max\left \{\int_{E_1}|u-u_{B(x,r)}|d\mu, \int_{E_2}|u-u_{B(x,r)}|d\mu\right \}\\
&\geq \frac{a}{2{C_d}^4 K}.\label{lemma3.25}
\end{align}
Combining the estimates \eqref{lemma3.22} and \eqref{lemma3.25}, we obtain that
\[\frac{a}{2{C_d}^4 K}\leq \frac{C_P r}{\mu(B(x,r))}(m+\varepsilon)a.
\]
Since $a>0$ and $\mu(F(\ox^{\frac{r}{2}},r))\leq \mu(B(x,2r))\leq C_d\mu(B(x,r))$, it follows that 
\[0<\frac{\mu(F(\ox^{\frac{r}{2}},r))}{r}\leq 2{C_d}^5 C_P K\cdot  (m+\varepsilon).
\]
 Since $\varepsilon$ and the pair $(x, r)$ are arbitrary, letting $\varepsilon\to 0$, the \Aone-condition holds.

\underline{\bf Case $p>1$:}
We define
 \[u(y)=\begin{cases}0 &\text{ if } y\in X\setminus T_1,\\
 \int_{[x,y]}\left (\frac{K^{j(w)-j(x)}\mu(w)}{\lambda(w)}\right )^{\frac{1}{1-p}}ds(w) & \text{ if }y\in F(x,r/2)\cap T_1,\\
 b& \text{ ortherwise}
 \end{cases}
 \] 
 where  
 \[b=\int_{[x,\ux_{\frac{r}{2}}]}\left (\frac{K^{j(w)-j(x)}\mu(w)}{\lambda(w)}\right )^{\frac{1}{1-p}}ds(w).
 \]
By the definition of $u$, we obtain that 
\begin{equation}\label{7-lem51}
g_u(y):=\left (\frac{K^{j(y)-j(x)}\mu(y)}{\lambda(y)}\right )^{\frac{1}{1-p}}\chi_{F(x,r/2)}(y)
\end{equation}
is an upper gradient of $u$.
Note that $u\equiv 0$ on $E_1$ and $u\equiv b$ on $E_2$ where $E_1$ and $E_2$ are defined as for $p=1$. Therefore, by an argument similar to the one in $p=1$ case, the left-hand side of the $(1, p)$-Poincar\'e inequality \eqref{poincare} can be estimated as
\begin{equation}
\label{lefhand}\dashint_{B(x,r)}|u-u_{B(x,r)}|d\mu \geq \frac{b}{2{C_d}^4 K}.
\end{equation}
For the right-hand side, we have that
\begin{align*}
C_Pr\left (\dashint_{\sigma B(x,r)}{g_u}^p\,d\mu\right )^{1/p}&=\frac{C_Pr}{\mu(\sigma B(x,r))^{1/p}}\left [\int_{F(x,r/2)}\left (\frac{K^{j(y)-j(x)}\mu(y)}{\lambda(y)}\right )^{\frac{p}{1-p}}d\mu(y)\right ]^{1/p}\\
&=\frac{C_Pr}{\mu(\sigma B(x,r))^{1/p}}\left [\int_{[x,\ux_{\frac{r}{2}}]}\left (\frac{K^{j(y)-j(x)}\mu(y)}{\lambda(y)}\right )^{\frac{1}{1-p}}ds(y)\right ]^{1/p}\\
&=\frac{C_Pr}{\mu(\sigma B(x,r))^{1/p}}b^{1/p}.
\end{align*}
Since $\mu(\sigma B(x,r))\geq \mu(B(x,r))$, it follows that
\begin{equation}
\label{righthand} C_Pr\left (\dashint_{\sigma B(x,r)}{g_u}^p\,d\mu\right )^{1/p}\leq \frac{C_Pr}{\mu(B(x,r))^{1/p}}b^{1/p}.
\end{equation}
Combining \eqref{lefhand} and $\eqref{righthand}$, we obtain that
\[\frac{b}{2{C_d}^4 K}\leq \frac{C_Pr}{\mu(B(x,r))^{1/p}}b^{1/p}.
\]
Notice that $\mu(F(\ox^{\frac{r}{2}},r))\leq \mu(B(x,2r))\leq C_d\mu(B(x,r))$. Hence we have
\[0<\frac{\mu(F(\ox^{\frac{r}{2}},r))^{1/p}}{r}\leq 2{C_d}^{4+\frac{1}{p}} C_p K b^{\frac{1-p}{p}}.
\]
Recalling the definition of $b$, the above estimate can be rewritten as
\[0< \frac{\mu(F(\ox^{\frac{r}{2}},r))}{r} \leq 2^p {C_d}^{4p+1} {C_P}^p K^p \left(\frac{1}{r}\int_{[x,\ux_{\frac{r}{2}}]}\left (\frac{K^{j(w)-j(x)}\mu(w)}{\lambda(w)}\right )^{\frac{1}{1-p}}ds(w)\right)^{1-p}.\]
Since the pair $(x, r)$ is arbitrary, the above estimate implies that $\mu$ satisfies the \Ap-condition. 
\end{proof}	
\begin{proof}[Proof of Theorem \ref{main theorem} for $K\geq 2$] The proof follows from Lemma \ref{Lemma-tree-1} and Lemma \ref{Lemma-tree-2}.
\end{proof}
\section{Proof of Theorem \ref{main theorem} for $K=1$}\label{section4}
\begin{lem}\label{lemma51}
	Let $1\leq p<\infty$ and $X$ be a $1$-regular tree with distance $d$ and measure $\mu$. 
	Suppose that $\mu$ is $p$-admissible. Then $\mu$ satisfies the $\text{A}_\text{p}$-condition far from $0$, i.e., 
		\[\sup\left \{\text{\Ap}(x,r): x\in X, 0<r\leq8d(0,x)\right \}<\infty.\]
\end{lem}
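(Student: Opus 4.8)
The plan is to reuse essentially verbatim the test-function argument of Lemma \ref{Lemma-tree-2}, the only genuinely new point being the treatment of the part of the ball lying \emph{above} $x$, where the hypothesis $r\le 8\,d(0,x)$ must enter. Since $K=1$ we have $K^{j(w)-j(x)}=1$ throughout, so for a fixed ball $B(x,r)$ with $0<r\le 16\,d(0,x)$ I would use the very same function $u$ (in both cases $p=1$ and $p>1$) and the same upper gradient as in Lemma \ref{Lemma-tree-2}, now with $T_{1}$ equal to the downward ray $\{y:x\le y\}$ and
\[
E_{1}=B(x,r)\setminus T_{1},\qquad E_{2}=F(x,r)\setminus F(x,r/2).
\]
As there, $u$ takes the value $0$ on $E_{1}$ and a fixed value ($a$ when $p=1$, $b$ when $p>1$) on $E_{2}$, so once a lower bound $\min\{\mu(E_{1}),\mu(E_{2})\}\ge c\,\mu(B(x,r))$ with $c>0$ depending only on $C_{d}$ is established, the lower estimate for the left-hand side of the Poincar\'e inequality and the computation of its right-hand side are identical to those in Lemma \ref{Lemma-tree-2}.

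The bound $\mu(E_{2})\ge C_{d}^{-4}\mu(B(x,r))$ is obtained exactly as in Lemma \ref{Lemma-tree-2} (it uses only doubling and the inclusion $B(\ux_{3r/4},r/4)\subset E_{2}$) and needs no restriction on $r$. The essential difference is $E_{1}$: for $K=1$ there is no sibling branch at $x$, so $E_{1}$ is merely the geodesic segment joining $x$ to its ancestor $\ox^{r}$, and to bound $\mu(E_{1})$ from below I must invoke the far-from-$0$ assumption. Concretely, because $d(0,x)\ge r/16$ the ancestor $z:=\ox^{r/16}$ exists at distance $r/16$ above $x$, and a direct check shows $B(z,r/32)\subset E_{1}$ (every point of $B(z,r/32)$ lies within distance $3r/32<r$ of $x$ and stays strictly above $x$, hence outside $T_{1}$). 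Since $B(x,r)\subset B(z,2r)$ and $2r/(r/32)=2^{6}$, doubling gives
\[
\mu(E_{1})\ge\mu(B(z,r/32))\ge C_{d}^{-6}\,\mu(B(z,2r))\ge C_{d}^{-6}\,\mu(B(x,r)).
\]

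With $\min\{\mu(E_{1}),\mu(E_{2})\}\ge C_{d}^{-6}\mu(B(x,r))$ in hand, the remainder of the proof copies Lemma \ref{Lemma-tree-2} with the factor $K$ replaced by $1$, yielding
\[
\frac{\mu(F(\ox^{r/2},r))}{r}\le C\left(\frac{1}{r}\int_{[x,\ux_{r/2}]}\left(\frac{\mu(w)}{\lambda(w)}\right)^{\frac{1}{1-p}}ds(w)\right)^{1-p}
\]
when $p>1$, and the analogous $\esssup$ estimate when $p=1$; either one is precisely the statement $\text{A}_\text{p}(x,r/2)\le C$. Since the Poincar\'e inequality was applied at scale $r$ under the single constraint $r\le 16\,d(0,x)$, and this produces the $\text{A}_\text{p}$-bound at scale $r/2$, writing $\rho=r/2$ turns the constraint into $\rho\le 8\,d(0,x)$; as $x$ and $\rho$ are arbitrary this gives $\sup\{\text{A}_\text{p}(x,\rho):x\in X,\ 0<\rho\le 8\,d(0,x)\}<\infty$. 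I expect the main obstacle to be exactly the lower bound for $\mu(E_{1})$: one must extract a fixed proportion of $\mu(B(x,r))$ from the thin upward segment using only doubling and the room guaranteed by $r\le 8\,d(0,x)$, and one must track the constants carefully so that the range of validity comes out as the stated $r\le 8\,d(0,x)$ rather than a smaller multiple of $d(0,x)$.
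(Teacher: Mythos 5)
Your proposal is correct and takes essentially the same route as the paper: its proof of Lemma \ref{lemma51} likewise sets $T_1=F(x,\infty)$, reruns the proof of Lemma \ref{Lemma-tree-2} for pairs $(x,r)$ with $d(0,x)\geq r/16>0$, observes that the only danger is the lower bound \eqref{lemma3.24} (since $E_1$ could otherwise be empty when $K=1$), and then rescales $r\mapsto r/2$ to obtain the stated range $0<\rho\leq 8\,d(0,x)$. Your explicit check that $B(\ox^{r/16},r/32)\subset E_1$, which with doubling yields $\mu(E_1)\geq C_d^{-6}\mu(B(x,r))$, correctly supplies the one detail the paper leaves implicit.
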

\begin{proof}
Let $(x,r)$ be an arbitrary pair with $d(0,x)\geq r/16>0$. Since $K=1$, we may let $T_1:=F(x, \infty)=\{y\in X: |y|\geq |x|\}$ and repeat the proof of Lemma \ref{Lemma-tree-2}. The only danger is whether \eqref{lemma3.24} holds, since, for $K=1$, $E_1$ could be empty. But here we required that $d(0,x)\geq r/16>0$, which gives a version of \eqref{lemma3.24}. Then the proof of Lemma \ref{Lemma-tree-2} gives that $\text{\Ap}(x, \frac r2)\leq C(p, K, C_d, C_P)$, where $C(p, K, C_d, C_P)$ is a constant only depending on $p, K, C_d$ and $C_P$. Since the pair $(x,r)$ is  arbitrary  with $d(0,x)\geq r/16>0$, we obtain that
\[\sup\left \{\text{\Ap}\left(x,\frac r2\right): x\in X, 0<\frac r2\leq8d(0,x)\right \}<\infty,\]
which gives the result.
\end{proof}

\begin{lem}\label{lemma4.3}
	Let $1\leq p<\infty$ and $X$ be a $1$-regular tree with distance $d$ and measure $\mu$. Assume that $\mu$ satisfies the \Ap-condition far from $0$. Then we have:  
	\begin{enumerate}
		\item The measure $\mu$ is doubling.
		\item There exists a  positive constant $C_p>0$ such that for all balls $B(x,r)$ with $x\in X$ and $0<r\leq \frac{4}{5} d(0, x)$, every integrable function $u$ on $B(x,r)$ and all upper gradients $g$ of $u$,
\begin{equation}\label{poincareaway}
\dashint_{B(x,r)}|u-u_{B(x,r)}|\,d\mu \leq C_p r\left (\dashint_{B(x,r)}g^p\,d\mu \right )^{1/p}.
\end{equation}
\end{enumerate}\label{lemma52}
\end{lem}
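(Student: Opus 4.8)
The plan is to revisit the proof of Lemma \ref{Lemma-tree-1} and track precisely which instances of the quantity $\text{A}_\text{p}(\cdot,\cdot)$ it invokes, so that the weaker hypothesis \eqref{Ap2} still supplies them on the relevant ranges of $(x,r)$. Throughout I use that for $K=1$ the tree $X$ is isometric to $[0,\infty)$ with coordinate $t=d(0,\cdot)$, so every ball is an interval; in particular, writing $t_x:=d(0,x)$, one has $B(x,r)=B(0,t_x+r)$ whenever $t_x<r$.

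For part (1), recall that the doubling estimate in the proof of Lemma \ref{Lemma-tree-1} uses only the single bound $\text{A}_\text{p}(x,2r)\le C_A$ (via \eqref{eq1-lem41} when $p>1$, and the corresponding $\esssup$ bound when $p=1$). Since \eqref{Ap2} provides this bound exactly when $2r\le 8d(0,x)$, i.e.\ $d(0,x)\ge r/4$, the argument of Lemma \ref{Lemma-tree-1} gives $\mu(B(x,2r))\le C_d\mu(B(x,r))$ for every ball with $d(0,x)\ge r/4$, with $C_d$ depending only on $p$ and $C_A$. It remains to treat balls with $d(0,x)<r/4$. For such a ball $t_x<r$, so $B(x,r)=B(0,t_x+r)$ and $B(x,2r)=B(0,t_x+2r)\subseteq B(0,2(t_x+r))$; hence the claim reduces to the root-centred doubling $\mu(B(0,2s))\le C\mu(B(0,s))$. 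The latter I would obtain by iterating the far-from-$0$ doubling already established: applying it at an off-root centre $x'$ with $d(0,x')=\rho$ and radius $\rho$ (which lies in the admissible range $d(0,x')\ge\rho/4$) yields $\mu(B(0,3\rho))\le C\mu(B(0,2\rho))$, and finitely many such steps cross the factor $2$.

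For part (2), given a ball with $0<r\le\tfrac45 d(0,x)$ I would reproduce the Poincar\'e estimate from the proof of Lemma \ref{Lemma-tree-1}, which rests on a single instance $\text{A}_\text{p}(\ox^r,2r)\le C_A$ together with the doubling property now in hand. The point is that $0<r\le\tfrac45 d(0,x)$ forces $d(0,x)\ge\tfrac54 r>r$, so $\ox^r$ is the genuine ancestor at distance $r$ with $d(0,\ox^r)=d(0,x)-r\ge r/4$; consequently $2r\le 8d(0,\ox^r)$ and \eqref{Ap2} indeed yields $\text{A}_\text{p}(\ox^r,2r)\le C_A$. This is exactly the role of the constant $\tfrac45$, and it is tight for this argument. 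The remaining computation (steps \eqref{eq8-lem41}--\eqref{eq14-lem41} for $p>1$, and its $p=1$ analogue) then goes through, using the set inclusions $F(\ox^r,2r)\subseteq B(x,4r)$ and $B(x,r)\subseteq F(z,4r)$, where $z:=\overline{(\ox^r)}^{2r}$ is the ancestor appearing inside $\text{A}_\text{p}(\ox^r,2r)$.

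I expect the main obstacle to be the close-to-root doubling in part (1): unlike every other step it is not a single far-from-$0$ instance, but requires exploiting that $X$ is a ray to rewrite the relevant balls as root-centred ones and then running a short iteration. A secondary point requiring care is that, when $\tfrac54 r\le d(0,x)<3r$, the ancestor $z=\overline{(\ox^r)}^{2r}$ collapses to the root $0$, so one should verify the two inclusions above by a direct distance computation on the ray rather than relying on the notation ``$\ox^{3r}$''; both nevertheless hold, and the estimate is then unchanged.
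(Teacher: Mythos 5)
Your proposal is correct and takes essentially the same route as the paper: both parts re-run the proof of Lemma \ref{Lemma-tree-1}, observing that it only invokes $\text{A}_\text{p}(x,2r)$ (resp.\ $\text{A}_\text{p}(\ox^r,2r)$), which \eqref{Ap2} supplies precisely when $d(0,x)\geq r/4$ (resp.\ $d(0,x)\geq 5r/4$, matching the threshold $r\leq\frac45 d(0,x)$). Your treatment of the near-root doubling — identifying near-root balls with root-centred ones and iterating the far-from-$0$ doubling at off-root centres at distance $\rho$ with radius $\rho$ — is only a cosmetic variant of the paper's iteration at the centre $\underline{0}_{r/2}$ with radii $4r$, $2r$, $r$ followed by the inclusions $B(x,r)\subset B(0,2r)$ and $B(0,r/4)\subset B(x,r/2)$.
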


\begin{proof}
\underline{\bf Claim $1$:}
Recall the proof  of Lemma $\ref{Lemma-tree-1}$. It actually shows that for any pair $(x,r)$ with $\text{A}_\text{p}(x, 2r)\leq C_A$, we have 
\[\mu(B(x, 2r))\leq C(C_A) \mu(B(x, r)),\]
where $C(C_A)$ is a constant only depending on $C_A$. In this lemma, since $\mu$ only satisfies the $\text{A}_\text{p}$-condition far from $0$, i.e.,  
\[M_A:=\sup\left\{\text{A}_\text{p}(x, r): x\in X, 0<r\leq 8\, d(0, x)\right\}<\infty,\]
we obtain that there is a positive constant $C:=C(M_A)$ only depending on $M_A$ such that
	\begin{equation}\label{10-lem52}
	\mu(B(x,r))\leq C\mu(B(x,r/2))
	\end{equation}
	for all balls $B(x,r)$ with $d(0,x)\geq r/8>0$.

 To get that $\mu$ is a doubling measure, it is sufficient to show that $(\ref{10-lem52})$ holds for all balls $B(x,r)$ with $d(0,x)<r/8$. 
	Note that $d(0,\underline{0}_{\frac{r}{2}})=\frac{r}{2}\geq \max\{4r/8, 2r/8, r/8\}$. Applying $(\ref{10-lem52})$ for $B(\underline{0}_{\frac{r}{2}},4r)$, $B(\underline{0}_{\frac{r}{2}}, 2r)$ and $B(\underline{0}_{\frac{r}{2}},r)$ in turns, we obtain that
	\begin{equation*}
	\mu(B(\underline{0}_{\frac{r}{2}},4r))\leq C\mu(B(\underline{0}_{\frac{r}{2}},2r))\leq C^2\mu(B(\underline{0}_{\frac{r}{2}},r))\leq C^3\mu(B(\underline{0}_{\frac{r}{2}},r/2)).
	\end{equation*}
Hence 
	\begin{equation}
	\mu(B(\underline{0}_{\frac{r}{2}},4r))\leq C^3\mu(B(\underline{0}_{\frac{r}{2}}, r/2)). \label{11-lem52}
	\end{equation}
	 From $B(\underline{0}_{\frac{r}{2}},r/2)\subset B(0,r)$ and $B(0,2r)\subset B(\underline{0}_{\frac{r}{2}},4r)$, we have 
	\[\mu(B(0,2r))\leq \mu(B(\underline{0}_{\frac{r}{2}},4r)), \ \ \mu(B(\underline{0}_{\frac{r}{2}},r/2))\leq \mu(B(0,r))
	\]for all $r>0$. Combining with $(\ref{11-lem52})$, we get that 
	\begin{equation*}
	\mu(B(0,2r))\leq C^3\mu(B(0,r))
	\end{equation*}for all $r>0$. In particular,
	\begin{equation}
	\label{12-lem52}\mu(B(0,2r))\leq C^9\mu(B(0,r/4))
	\end{equation} for all $r>0$. Let $B(x,r)$ be an arbitrary ball with $d(0,x)<r/8$. By $B(x,r)\subset B(0,2r)$ and $B(0,r/4)\subset B(x, r/2)$, it follows from $(\ref{12-lem52})$  that 
	\begin{equation}
\notag \mu(B(x,r))\leq \mu(B(0,2r))\leq C^9 \mu(B(0, r/4))\leq C^9\mu(B(x,r/2))
	\end{equation} for all balls $B(x,r)$ with $d(0,x)<r/8$. Combining with $(\ref{10-lem52})$, we conclude that $\mu$ is a doubling measure.

\underline{\bf Claim $2$:} Recall the proof  of Lemma $\ref{Lemma-tree-1}$. It actually shows that for any pair $(x,r)$ with $\text{A}_\text{p}(\ox^r, 2r)\leq C_A$, there exists a constant $C_p(C_A)$ such that for every integrable function $u$ on $B(x, r)$ and all upper gradients $g$ of $u$,  the $(1, p)$-Poincar\'e inequality \eqref{poincareaway} holds for $B(x, r)$,
where $C_p(C_A)$ is a constant only depending on $C_A$. In this lemma, $\mu$ only satisfies the $\text{A}_\text{p}$-condition far from $0$, i.e.,  
\[M_A:=\sup\left\{\text{A}_\text{p}(x, r): x\in X, 0<r\leq 8\, d(0, x)\right\}<\infty.\]
Since 
\[0<2r\leq 8\, d(0, \ox^r)\ \iff\ d(0, \ox^r)\geq r/4>0\ \iff\  d(0, x)\geq 5r/4>0,\]
we obtain that there is a positive constant $C_p:=C(M_A)$ only depending on $M_A$ such that the Claim $2$ holds.
\end{proof}

We say $(X, d, \mu)$ supports a {\it $(1, p)$-Poincar\'e inequality at $0$}, $1\leq p<\infty$, if there are  positive constants $C_0, \sigma_0\geq 1$ such that for any $r>0$, every integrable function $u$ on $\sigma_0 B(0, r)$ and all upper gradients $g$ of $u$,
\begin{equation}\label{poincare0}
\dashint_{B(0,r)}|u-u_{B(0,r)}|\,d\mu \leq C_0 r\left (\dashint_{\sigma_0 B(0,r)}g^p\,d\mu \right )^{1/p}.
\end{equation}

\begin{prop}\label{prop}
Let $1\leq p<\infty$ and $(X, d, \mu)$ be as in Lemma \ref{lemma4.3}. Assume additionally that $(X, d, \mu)$ supports a {$(1, p)$-Poincar\'e inequality at $0$}. Then $\mu$ is $p$-admissible.
\end{prop}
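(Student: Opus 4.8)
The plan is to assemble a genuine $(1,p)$-Poincar\'e inequality for \emph{every} ball out of the three ingredients now available: global doubling (Claim $1$ of Lemma \ref{lemma4.3}), the Poincar\'e inequality \eqref{poincareaway} for balls far from the root (Claim $2$ of Lemma \ref{lemma4.3}), and the extra hypothesis \eqref{poincare0} at $0$. Since doubling is already handed to us, the only real work is the Poincar\'e inequality, and that in turn reduces to the balls \emph{not} covered by Claim $2$, namely those with $r>\tfrac45 d(0,x)$ — precisely the balls that ``see'' the root. For balls with $r\le\tfrac45 d(0,x)$ I would simply invoke Claim $2$.

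For the near-root balls I would first exploit the geometry peculiar to $K=1$: the $1$-regular tree is a single ray, so writing $s(y):=d(0,y)$ one has $d(x,y)=|s(y)-s(x)|$ for all $x,y\in X$. Fix $B(x,r)$ with $\rho:=d(0,x)<\tfrac54 r$ and set $R:=\rho+r$. A direct computation with $s$ gives the sandwich
\[
B(x,r)\subseteq B(0,R)\subseteq B\!\left(x,\tfrac54 r\right),
\]
whence one application of doubling yields $\mu(B(0,R))\le C_d\,\mu(B(x,r))$. More generally every ball appearing below — $B(x,r)$, $B(0,R)$, $\sigma B(x,r)$, $B(0,\sigma_0 R)$ — has radius a bounded multiple of $r$, so by doubling their measures are all mutually comparable to $\mu(B(x,r))$.

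Next I would transfer \eqref{poincare0} from the root-centered ball $B(0,R)$ to $B(x,r)$ by the standard averaging trick. Using $B(x,r)\subseteq B(0,R)$,
\[
\dashint_{B(x,r)}|u-u_{B(x,r)}|\,d\mu \le 2\dashint_{B(x,r)}|u-u_{B(0,R)}|\,d\mu \le 2\,\frac{\mu(B(0,R))}{\mu(B(x,r))}\dashint_{B(0,R)}|u-u_{B(0,R)}|\,d\mu,
\]
and then apply \eqref{poincare0} to the last average. Taking the dilation $\sigma=\tfrac94\sigma_0$ guarantees $\sigma_0 B(0,R)=B(0,\sigma_0 R)\subseteq B(x,\sigma r)=\sigma B(x,r)$ (indeed $\sigma_0 R<\tfrac94\sigma_0 r=\sigma r$ controls the upper end, while $\rho<\tfrac54 r\le\sigma r$ controls the lower end), so $\int_{\sigma_0 B(0,R)}g^p\le\int_{\sigma B(x,r)}g^p$; combining this with $R<\tfrac94 r$ and the measure comparisons of the previous paragraph produces
\[
\dashint_{B(x,r)}|u-u_{B(x,r)}|\,d\mu \le C\,r\left(\dashint_{\sigma B(x,r)}g^p\,d\mu\right)^{1/p}
\]
with $C=C(C_d,C_0,\sigma_0,p)$.

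Finally I would merge the two regimes into one statement. For $r\le\tfrac45 d(0,x)$ Claim $2$ gives the inequality with dilation $1$, which I inflate to dilation $\sigma=\tfrac94\sigma_0$ at the cost of a doubling constant, since $\dashint_{B}g^p\le \tfrac{\mu(\sigma B)}{\mu(B)}\dashint_{\sigma B}g^p\le C_d'\dashint_{\sigma B}g^p$; for $r>\tfrac45 d(0,x)$ the previous step applies. Thus the $(1,p)$-Poincar\'e inequality \eqref{poincare} holds for all balls with the single dilation $\sigma=\tfrac94\sigma_0$, and since $\mu$ is doubling, $\mu$ is $p$-admissible. The essential point — and the only delicate one — is the near-root regime: the hypothesis of a Poincar\'e inequality at $0$ is exactly what compensates for the degeneration of Claim $2$ as $r\uparrow d(0,x)$, and the bridge between them is the cheap domination of the off-center ball $B(x,r)$ by the root-centered ball $B(0,\rho+r)$, a comparison special to $K=1$ (for $K\ge 2$ no single root-centered ball dominates an off-center ball with comparable measure, which is why that case required the separate argument of Lemma \ref{Lemma-tree-2}).
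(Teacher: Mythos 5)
Your proposal is correct and takes essentially the same route as the paper's proof: reduce via Claim 2 of Lemma \ref{lemma4.3} to the near-root balls with $d(0,x)<\frac54 r$, dominate such a ball by a root-centered ball of comparable radius (you take $B(0,\rho+r)$, the paper takes $B(0,4r)$), apply the $(1,p)$-Poincar\'e inequality at $0$, and transfer the gradient average back to a dilate of $B(x,r)$ by doubling. The only differences are cosmetic --- your sharper inclusions via the ray identity $d(x,y)=|s(x)-s(y)|$ (the paper gets by with the triangle inequality at the cost of larger constants) and your explicit harmonization of the two regimes into the single dilation $\sigma=\frac94\sigma_0$, which the paper leaves implicit with its dilation $8\sigma_0$.
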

\begin{proof}	
	It follows from Claim $2$ of Lemma \ref{lemma4.3} that it suffices to check the $(1, p)$-Poincar\'e inequality \eqref{poincare} for balls $B(x, r)$ with $d(0, x)<5r/4$.

	Fix an arbitrary ball $B(x, r)$ with $d(0, x)<5r/4$. 
	By the triangle inequality, the left-hand side of a $(1, p)$-Poincar\'e inequality \eqref{poincare} can be estimated as
	\begin{equation}\label{triangle}
	\dashint_{B(x,r)}|u-u_{B(x,r)}|d\mu \leq 2\dashint_{B(x,r)}|u-u_{B(0,4r)}|d\mu. 
	\end{equation}
	 It follows from Claim $1$ of  Lemma \ref{lemma4.3} that $\mu$ is a doubling measure. Without loss of generality, we may assume that the doubling constant is $C_d$. Since $d(0, x)<5r/4$, then $B(0, 4r)\subset B(x, 8r)$. Hence by doubling property,
	 \[\mu(B(x,r))\geq {C_d}^{-3}\mu(B(x,8r))\geq {C_d}^{-3}\mu(B(0,4r)).\]
Combining with \eqref{poincare0}, the estimate \eqref{triangle} can be rewritten as
\begin{align}
\dashint_{B(x,r)}|u-u_{B(x,r)}|d\mu &\leq 2{C_d}^3 \dashint_{B(0, 4r)}|u-u_{B(0,4r)}|d\mu\notag\\
&\leq 8{C_d}^3 C_0 r \left(\dashint_{\sigma_0 B(0, 4r)}g^p\, d\mu\right)^{1/p}.\label{ffinal}
\end{align}
An easy verification shows that
\begin{equation}\label{final1}
\dashint_{\sigma_0 B(0, 4r)}g^p\, d\mu \leq {C_d}^2 \dashint_{\sigma_0 B(x, 8r)} g^p\, d\mu,
\end{equation}
since $\sigma_0 B(0, 4r)\subset \sigma_0 B(x, 8r)$ and $\mu(\sigma_0 B(x, 8r))\leq {C_d}^2 \mu(\sigma_0 B(x, 2r))\leq {C_d}^2 \mu(\sigma_0 B(0, 4r))$ by doubling.
Combining \eqref{ffinal} and \eqref{final1}, we deduce that
\begin{equation}\label{final}
\dashint_{B(x,r)}|u-u_{B(x,r)}|d\mu \leq 8{C_d}^{3+2/p} C_0 r \left(\dashint_ {8\sigma_0B(x,  r)} g^p\, d\mu\right)^{1/p}.
\end{equation}

Since $B(x, r)$ is an arbitrary ball with $d(0, x)<5r/4$, combining \eqref{final} with Claim $1$ and $2$ of Lemma \ref{lemma4.3}, it shows that $\mu$ is $p$-admissible.
\end{proof}

The following lemma shows that the assumption in Lemma \ref{lemma4.3} is sufficient to obtain a $(1, p)$-Poincar\'e inequality at $0$, which means that the additional assumption in Proposition \ref{prop} is redundant. The core idea of the proof comes from the proof of \cite[Theorem 1]{PP1995}.
\begin{lem}\label{lemma53}
	Let $1\leq p<\infty$ and $(X, d, \mu)$  be as in Lemma \ref{lemma4.3}. Then $(X, d,\mu)$ supports a $(1, p)$-Poincar\'e inequality at $0$.
\end{lem}
\begin{proof}
 It follows from Lemma \ref{lemma4.3} that $\mu$ is doubling and $(X, d, \mu)$ supports the $(1, p)$-Poincar\'e inequality \eqref{poincareaway}.
 For any $R>0$, since $X$ is a $1$-regular tree, we have  $B(0, R)=[0, x_R)$, where $x_R\in X$ with $|x_R|=R$. By using the geometry of the $1$-regular tree, we are able to modify the proof of \cite[Theorem 1]{PP1995} by using a better chain condition $\{B(x_i, r_i)\}_{i\in\mathbb N}$ which requires additionally that $r_i<\frac{4}{5}d(x_i, 0)$ (since \eqref{poincareaway} only works  for balls $B(x, r)$ with $r<\frac45d(x, 0)$). Hence it follows from the proof of \cite[Theorem 1]{PP1995} that  there is a constant $C$ independent of $R$ such that
 \[\dashint_{B(0, R)}|u-u_{B(0, R)}|\, d\mu\leq C R\left(\dashint_{B(0, R)} g^p\, d\mu\right)\]
 for all integrable functions $u$ and all upper gradients $g$ of $u$. 
\end{proof}

 \begin{proof}[Proof of Theorem \ref{main theorem} for $K=1$] The claim follows from Lemma \ref{lemma4.3}, Proposition \ref{prop} and Lemma \ref{lemma53}.
 \end{proof}

 \begin{rem}\label{remark}\rm
 Fix any $\infty>c>1$, if we change the \Ap-condition far from $0$, i.e., the condition \eqref{Ap2} to 
 \begin{equation}\label{newap}
 \sup\left \{\text{\Ap}(x,r): x\in X, 0<r\leq c\,d(0,x)\right \}<\infty,
 \end{equation}
repeating the proof Theorem \ref{main theorem} and related lemmas, it follows that the condition \eqref{newap} is also equivalent to $\mu$ being $p$-admissible.
 \end{rem}
	
\begin{example}\label{example}\rm
The following example from   \cite[Example 4]{AH19} or \cite[Example 6.2]{BBL} gives a $1$-regular tree with a non-doubling measure which satisfies \eqref{newap} for any $0<c<1$. Let $X=(\mathbb R_+, dx, \mu\, d\mu)$ with $\mu(x)=\min\{1, x^{-1}\}$. Then it follows from \cite{BBL} and \cite{AH19} that $\mu$ is not a doubling measure, hence $\mu$ is not $p$-admissible for any $1\leq p<\infty$. It remains to show that \eqref{newap} holds for any $0<c<1$ and $1\leq p<\infty$.

Fix $0<c<1$. Let $R= \frac{1}{1-c}$. To show \eqref{newap} holds, it suffices to show that 
\begin{equation}\label{veryn}
\sup \left \{\text{\Ap}(t,\beta t):  0<\beta\leq c, t\in (R, \infty)\right \}<\infty,
\end{equation}
since
\[\sup \left \{\text{\Ap}(t,\beta t):  0<\beta\leq c, t\in [0, R]\right \}<\infty\]
is given by the fact that $(R+cR)^{-1}\leq \mu(x)\leq 1$ for any $x\in F({\bar t}^{\beta t}, 2\beta t)$ with $t\leq R$ and $0<\beta\leq c$.
 For any $0<\beta\leq c$, since $F({\bar t}^{\beta t}, 2\beta t)=[t-\beta t, t+\beta t]$ and $t-\beta t>1$ for any $t>R$, we have that
\[\mu(F({\bar t}^{\beta t}, 2\beta t)) \leq \int_{(1-\beta)t}^{(1+\beta)t} x^{-1} \, dx= \log \left(\frac{1+\beta}{1-\beta}\right)\leq \log\left(\frac{1+c}{1-c}\right).\]
On the other hand, we have that for $p>1$, 
\[\left(\frac{1}{\beta t}\int_{t}^{t+\beta t} x^{\frac{1}{p-1}}\, dx\right)^{p-1} =t\left(\frac{(1+\beta)^{p/(p-1)}-1}{\beta}\right)^{p-1}\leq C(c, p) t,\]
where $C(c, p)$ is a constant only depending on $c$ and $p$, and that
\[\esssup_{x\in[t, t+\beta t]} x= (1+\beta)t\leq (1+c)t.\]
Hence condition \eqref{veryn} holds.
\end{example}
\section*{Acknowledgement}
The authors thank our advisor Professor Pekka Koskela for helpful discussions.

\noindent
Department of Mathematics and Statistics, University of Jyv\"askyl\"a, PO~Box~35, FI-40014 Jyv\"askyl\"a, Finland.

\medskip

\noindent Khanh Ngoc Nguyen

\noindent{\it E-mail address}:  \texttt{khanh.n.nguyen@jyu.fi}, \texttt{khanh.mimhus@gmail.com}

\noindent Zhuang Wang

\noindent{\it E-mail address}:  \texttt{zhuang.z.wang@jyu.fi}

\end{document}